\newtheorem{theorem}{Theorem}
\newtheorem{rem}[theorem]{Remark}
\newcommand{\D}{\displaystyle}
\begin{document}
\begin{CJK*}{GBK}{song}
\textwidth 150mm \textheight 225mm
\title{Distance integral complete multipartite graphs with $s=5,6$
\thanks{Supported by the National Natural Science Foundation of China\,(No.11171273)
 and graduate starting seed fund of Northwestern Polytechnical University\,(No.Z2014173).}
 \thanks{Ruosong Yang (1990-), male, native of LuoYang, Henan, an postgraduate student of Northwestern Polytechnical University, engages in graph theory and its application.}}
\author{{Ruosong Yang, Ligong Wang\footnote{Corresponding
author}
}\\
{\small Department of Applied Mathematics, School of Science,
Northwestern Polytechnical University,}\\ {\small  Xi'an, Shaanxi
710072, People's Republic
of China.}\\ {\small E-mail: lnxyangruosong@163.com, lgwangmath@163.com}\\
 }
\date{}
\maketitle
\begin{center}
\begin{minipage}{120mm}
\vskip 0.3cm
\begin{center}
{\small {\bf Abstract}}
\end{center}
{\small Let $D(G)=(d_{ij})_{n\times
n}$ denote the distance matrix of a connected graph $G$ with order
$n$, where $d_{ij}$ is equal to the distance between vertices
$v_{i}$ and $v_{j}$ in $G$. A graph is called distance integral if
all eigenvalues of its distance matrix are integers. In 2014, Yang
and Wang gave a sufficient and necessary condition for complete
$r$-partite graphs $K_{p_{1},p_{2},\ldots,p_{r}}=K_{a_{1}\cdot p_{1},a_{2}\cdot p_{2},\ldots,a_{s}\cdot p_{s}}$ to
be distance integral and obtained such distance integral graphs with $s=1,2,3,4$. However distance integral
complete multipartite graphs $K_{a_{1}\cdot p_{1},a_{2}\cdot
p_{2},\ldots,a_{s}\cdot p_{s}}$ with $s>4$ have not been found. In
this paper, we find and construct some infinite classes of these
distance integral graphs $K_{a_{1}\cdot p_{1},a_{2}\cdot
p_{2},\ldots,a_{s}\cdot p_{s}}$ with $s=5,6$. The problem of the
existence of such distance integral graphs $K_{a_{1}\cdot
p_{1},a_{2}\cdot p_{2},\ldots,a_{s}\cdot p_{s}}$ with arbitrarily
large number $s$ remains open.

\vskip 0.1in \noindent {\bf Key Words}: \ Complete multipartite
graph, Distance matrix, Distance integral, Graph spectrum. \vskip
0.1in \noindent {\bf 2000 MR Subject Classification}: \ 05C50,
05C12.}
\end{minipage}
\end{center}

\section{Introduction }

Let $G$ be a simple connected undirected graph with vertex set
denoted by $V(G)=\{v_{1},v_{2},\ldots,$ $v_{n}\}$. $A(G)=(a_{ij})$
is an $n\times n$ matrix, where $a_{ij}=1$ if $v_{i}$ and $v_{j}$
are adjacent and $a_{ij}=0$ otherwise. The distance between the
vertices $v_{i}$ and $v_{j}$ is the length of a shortest path
between them, and is denoted by $d_{ij}$. The distance matrix of
$G$, denoted by $D(G)$, is the $n\times n$ matrix whose
$(i,j)$-entry is equal to $d_{ij}$ for $ i,j=1,2,\ldots,n$
 (see \cite{BuHa}). Note that $d_{ii}=0$, $i=1,2,\ldots,n$. The
distance characteristic polynomial (or $D$-polynomial) of $G$ is
$D_{G}(x)=|xI_{n}-D(G)|$,  where $I_{n}$ is the $n\times n$ identity
matrix. The eigenvalues of $D(G)$ are said to be the distance
eigenvalues or $D$-eigenvalues of $G$. Since $D(G)$ is a real
symmetric matrix, the $D$-eigenvalues are real and can be denoted as
$\mu_{1}\geq\mu_{2}\geq\ldots\geq\mu_{n}$. The distance spectral
radius  of $G$ is the largest $D$-eigenvalue $\mu_1$ and denoted by
$\mu(G)$. Assume that $\mu_{1}>\mu_{2}>\ldots>\mu_{t}$ are $t$
distinct $D$-eigenvalues of $G$ with the corresponding
multiplicities $k_{1},k_{2},\ldots,k_{t}$. We denote by $Spec(G)=
 \left(
\begin{array}{lllll}
 \mu_{t} & \mu_{t-1}& \ldots   & \mu_{2}& \mu_{1} \\
   k_{t}  &  k_{t-1} &  \ldots  & k_{2}  & k_{1}\\
\end{array}
\right) $ the Distance spectrum or the $D$-spectrum of $G$. $G$ is called integral if all eigenvalues of $A(G)$ are integers.
Similarly to integral graphs, a graph is called distance integral if
all its $D$-eigenvalues are integers.  Many results about distance
spectral radius and the $D$-eigenvalues of graphs can be found in
\cite{AH,BNP,Das,InGu,LHWS,Mer,StIl,StIn,WaZhou,Zhou,ZhIl}. Some more results about spectral graph theory can be found in
\cite{LZ,T,Zho}.

A {\it complete $r$-partite $(r\geq 2)$ graph}
$K_{p_{1},p_{2},\cdot\cdot\cdot,p_{r}}$ is a graph with a set
$V=V_{1}\cup V_{2}\cup\cdot\cdot\cdot\cup V_{r}$ of
$p_{1}+p_{2}+\cdot\cdot\cdot+p_{r}(=n)$ vertices, where $V_{i}$'s
are nonempty disjoint set, $|V_{i}|=p_{i}$, such that two vertices
in $V$ are adjacent if and only if they belong to different
$V_{i}$'s. Assume that the number of distinct integers of
$p_{1},p_{2},\cdot\cdot\cdot,p_{r}$ is $s$. Without loss of
generality, assume that the first $s$ ones are the distinct integers
such that $p_{1}<p_{2}<\ldots<p_{s}$. Suppose that $a_{i}$ is the
multiplicity of $p_{i}$ for each $i=1,2,\ldots,s.$ The complete
$r$-partite graph
$K_{p_{1},p_{2},\cdot\cdot\cdot,p_{r}}=K_{p_{1},\ldots,
p_{1},\ldots, p_{s},\ldots, p_{s}}$ on $n$ vertices is also denoted
by $K_{a_{1}\cdot p_{1},a_{2}\cdot p_{2},\ldots,a_{s}\cdot p_{s}}$,
where $r=\sum\limits_{i=1}^{s}a_{i}$ and
$n=\sum\limits_{i=1}^{s}a_{i}p_{i}.$ In 2014, Yang and Wang gave a
sufficient and necessary condition for $K_{a_{1}\cdot
p_{1},a_{2}\cdot p_{2},\ldots,a_{s}\cdot p_{s}}$ to be distance
integral and obtained such distance integral graphs $K_{a_{1}\cdot
p_{1},a_{2}\cdot p_{2},\ldots,a_{s}\cdot p_{s}}$ with $s=1,2,3,4$
(see\cite{YW}). In this paper, we find and construct some infinite
classes of these distance integral graphs $K_{a_{1}\cdot
p_{1},a_{2}\cdot p_{2},\ldots,a_{s}\cdot p_{s}}$ with $s=5,6$ by
using a computer search. For $s=5, 6$, we give a positive answer to
a question of Yang et al. \cite{YW}. The problem of the existence of
distance integral complete multipartite graphs $K_{a_{1}\cdot
p_{1},a_{2}\cdot p_{2},\ldots,a_{s}\cdot p_{s}}$ with arbitrarily
large number $s$ remains open.

\section{Preliminaries}
In this section, we shall give some known results on distance
integral complete $r$-partite graphs, which are used in our theorem
proof later.

\noindent\begin{theorem}\label{polynomial} (See Theorem 4.1 of
\cite{LHWS} and \cite{SMHP}) Let $G$ be a complete $r$-partite graph
$K_{p_{1},p_{2},\ldots,p_{r}}$ on $n$ vertices. Then the
$D$-polynomial of $G$  is
\begin{equation}
\label{eq:c-1}\\D_{G}(x)=\prod_{i=1}^r(x+2)^{(p_{i}-1)}\prod_{i=1}^r(x-p_{i}+2)(1-\sum_{i=1}^r\frac{p_{i}}{x-p_{i}+2}).
\end{equation}
\end{theorem}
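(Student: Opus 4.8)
The plan is to exploit the extremely simple structure of the distance matrix of a complete multipartite graph and reduce the order-$n$ determinant $D_G(x)=|xI_n-D(G)|$ to a rank-one perturbation of a block-diagonal matrix. First I would record that in $K_{p_1,\dots,p_r}$ two vertices in different parts are adjacent (distance $1$), while two distinct vertices of the same part are non-adjacent but have a common neighbour (distance $2$). Hence, ordering the vertices part by part,
\[
D(G)=J_n-I_n+\bigoplus_{i=1}^r\bigl(J_{p_i}-I_{p_i}\bigr),
\]
where $J_m$ is the all-ones $m\times m$ matrix, so that, using $\bigoplus_{i=1}^r I_{p_i}=I_n$,
\[
xI_n-D(G)=(x+2)I_n-\bigoplus_{i=1}^r J_{p_i}-J_n =: N-\mathbf 1_n\mathbf 1_n^{\mathrm T},
\]
where $N=\bigoplus_{i=1}^r N_i$ is block-diagonal with blocks $N_i=(x+2)I_{p_i}-J_{p_i}$, and $\mathbf 1_m$ is the all-ones column vector of length $m$.

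Next I would evaluate the two ingredients required by the matrix determinant lemma. Since the eigenvalues of $J_{p_i}$ are $p_i$ (simple) and $0$ (multiplicity $p_i-1$), we get $\det N_i=(x-p_i+2)(x+2)^{p_i-1}$, hence $\det N=\prod_{i=1}^r(x+2)^{p_i-1}\prod_{i=1}^r(x-p_i+2)$. Also $N_i\mathbf 1_{p_i}=(x-p_i+2)\mathbf 1_{p_i}$, so for $x$ outside the finitely many roots of $\prod_{i}(x-p_i+2)$, where $N$ is invertible, $\mathbf 1_{p_i}^{\mathrm T}N_i^{-1}\mathbf 1_{p_i}=p_i/(x-p_i+2)$, and therefore $\mathbf 1_n^{\mathrm T}N^{-1}\mathbf 1_n=\sum_{i=1}^r p_i/(x-p_i+2)$. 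Applying $\det(N-\mathbf 1_n\mathbf 1_n^{\mathrm T})=\det N\,\bigl(1-\mathbf 1_n^{\mathrm T}N^{-1}\mathbf 1_n\bigr)$ then yields formula (\ref{eq:c-1}) for all such $x$, and since both sides are polynomials in $x$ the identity extends to every $x$.

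A cosmetic alternative that sidesteps invertibility altogether is to treat the colour-class partition of $V(G)$ as an equitable partition of $D(G)$: the eigenvectors summing to zero on every class all afford the eigenvalue $-2$, contributing the factor $(x+2)^{\,n-r}=\prod_{i=1}^r(x+2)^{p_i-1}$ after using $n-r=\sum_{i=1}^r(p_i-1)$, while the remaining $r$ eigenvalues are those of the quotient matrix $\mathbf 1_r\mathbf p^{\mathrm T}+\operatorname{diag}(p_1,\dots,p_r)-2I_r$ with $\mathbf p=(p_1,\dots,p_r)^{\mathrm T}$, whose characteristic polynomial is computed by the same single rank-one update. I do not expect a genuine obstacle; the only points needing care are the bookkeeping that converts $(x+2)^{\,n-r}$ into $\prod_{i=1}^r(x+2)^{p_i-1}$ and the justification that the rank-one (matrix determinant lemma) step, a priori valid only where $N$ is nonsingular, is legitimate by polynomial identity.
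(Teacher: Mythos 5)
Your proof is correct. The paper does not prove Theorem~\ref{polynomial} at all --- it simply imports the formula from Theorem~4.1 of \cite{LHWS} and from \cite{SMHP} --- so there is no internal argument to compare against; your derivation (writing $D(G)=J_n-I_n+\bigoplus_i(J_{p_i}-I_{p_i})$, so that $xI_n-D(G)$ is a rank-one perturbation of the block-diagonal matrix $\bigoplus_i((x+2)I_{p_i}-J_{p_i})$, and applying the matrix determinant lemma with the polynomial-identity extension to the singular locus) is the standard and complete way to establish \eqref{eq:c-1}, and the equitable-partition variant you sketch is an equally valid alternative.
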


\noindent \begin{theorem}\label{th:chongyao}(See \cite{YW}) If the complete $r$-partite
graph $K_{p_{1},p_{2},\ldots ,p_{r}}=K_{a_{1}\cdot p_{1},a_{2}\cdot
p_{2},\ldots ,a_{s}\cdot p_{s}}$ on $n$ vertices is distance
integral then there exist integers $\mu_{i}(i=1,2,\ldots ,s)$ such
that $-2 < p_{1}-2 < \mu_{1} < p_{2}-2 < \mu_{2} < \cdots <
p_{s-1}-2 < \mu_{s-1} < p_{s}-2 < \mu_{s} <+\infty$ and the numbers
$a_{1},a_{2},\ldots a_{s}$ defined by
\begin{equation}\label{eq:c-12}
a_{k}=\frac{\prod_{i=1}^{s}(\mu_{i}-p_{k}+2)}{p_{k}\prod_{i=1,i\neq
k}^{s}(p_{i}-p_{k})}, k=1,2,\ldots ,s,
\end{equation}
are positive integers.

 Conversely, suppose that there exist integers $\mu_i$ $(i=1,2,\ldots,s)$
 such that $-2 < p_{1}-2 < \mu_{1} <
p_{2}-2 < \mu_{2} < \cdots < p_{s-1}-2 < \mu_{s-1} < p_{s}-2 <
\mu_{s} <+\infty$ and that the numbers
$a_{k}=\frac{\prod_{i=1}^{s}(\mu_{i}-p_{k}+2)}{p_{k}\prod_{i=1,i\neq
k}^{s}(p_{i}-p_{k})}(k=1,2,\ldots,s)$ are positive integers. Then
the complete $r$-partite graph $K_{p_{1},p_{2},\ldots
,p_{r}}=K_{a_{1}\cdot p_{1},\ldots ,a_{s}\cdot p_{s}}$ is distance
integral.
\end{theorem}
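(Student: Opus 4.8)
The plan is to read off the entire $D$-spectrum from the explicit factorization in Theorem \ref{polynomial} and then decide which eigenvalues are forced to be integers. Grouping equal parts in \eqref{eq:c-1} and writing $K_{p_{1},\ldots,p_{r}}=K_{a_{1}\cdot p_{1},\ldots,a_{s}\cdot p_{s}}$, one rewrites $D_{G}(x)$ as
\[
D_{G}(x)=(x+2)^{\,n-r}\prod_{k=1}^{s}(x-p_{k}+2)^{a_{k}-1}\,g(x),
\]
where $g$ is the \emph{monic} polynomial of degree $s$
\[
g(x)=\prod_{k=1}^{s}(x-p_{k}+2)-\sum_{k=1}^{s}a_{k}p_{k}\prod_{j\neq k}(x-p_{j}+2).
\]
Hence $D(G)$ has $-2$ with multiplicity $n-r$, each $p_{k}-2$ with multiplicity $a_{k}-1$, and $s$ further eigenvalues that are precisely the roots of $g$ (and one checks $(n-r)+\sum_{k}(a_{k}-1)+s=n$). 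Since $-2$ and the $p_{k}-2$ are already integers, $G$ is distance integral if and only if all $s$ roots of $g$ are integers.

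Next I would locate the roots of $g$ by analyzing $h(x)=1-\sum_{k=1}^{s}\frac{a_{k}p_{k}}{x-p_{k}+2}$, whose zeros are exactly the roots of $g$; note first that $g(p_{k}-2)=-a_{k}p_{k}\prod_{l\neq k}(p_{k}-p_{l})\neq 0$, so the roots avoid the poles. Because $h'(x)=\sum_{k}\frac{a_{k}p_{k}}{(x-p_{k}+2)^{2}}>0$, $h$ is strictly increasing between consecutive poles; comparing its one-sided limits at the poles $p_{1}-2<\cdots<p_{s}-2$ with the fact that $h(x)\to 1>0$ as $x\to\pm\infty$ shows that $g$ has exactly one simple real root $\mu_{k}$ in each interval $(p_{k}-2,p_{k+1}-2)$ for $k<s$, exactly one root $\mu_{s}\in(p_{s}-2,+\infty)$, and no root below $p_{1}-2$. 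Together with $-2<p_{1}-2$ (which holds since $p_{1}\ge 1$) this yields the chain $-2<p_{1}-2<\mu_{1}<p_{2}-2<\cdots<p_{s}-2<\mu_{s}<+\infty$. In the forward direction the $\mu_{k}$ are these roots, and distance integrality makes them integers.

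The formula \eqref{eq:c-12} for $a_{k}$ then comes from evaluating $g$ at $x=p_{k}-2$ in two ways: from the definition only the $k$-th summand survives, giving $g(p_{k}-2)=-a_{k}p_{k}\prod_{l\neq k}(p_{k}-p_{l})$, while from $g(x)=\prod_{i=1}^{s}(x-\mu_{i})$ we get $g(p_{k}-2)=(-1)^{s}\prod_{i=1}^{s}(\mu_{i}-p_{k}+2)$; equating these and using $\prod_{l\neq k}(p_{k}-p_{l})=(-1)^{s-1}\prod_{l\neq k}(p_{l}-p_{k})$ gives exactly \eqref{eq:c-12}, and $a_{k}$ is a positive integer since it is the multiplicity of the part size $p_{k}$. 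For the converse, given integers $\mu_{i}$ satisfying the chain and positive integers $a_{k}$ defined by \eqref{eq:c-12}, form $G=K_{a_{1}\cdot p_{1},\ldots,a_{s}\cdot p_{s}}$: by the same computation $g(p_{k}-2)=\prod_{i}(p_{k}-2-\mu_{i})$ for every $k$, so $g(x)-\prod_{i=1}^{s}(x-\mu_{i})$ has degree at most $s-1$ yet vanishes at the $s$ distinct points $p_{1}-2,\ldots,p_{s}-2$, hence is identically zero; thus $g(x)=\prod_{i}(x-\mu_{i})$, every root of $g$ is an integer, and by the criterion of the first paragraph $G$ is distance integral.

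I expect the genuinely delicate step to be the root-location argument in the second paragraph: getting all one-sided limits of $h$ at the poles and at $\pm\infty$ right, ruling out a spurious root below $p_{1}-2$, and confirming that none of the eigenvalue ``slots'' $-2$, $p_{k}-2$, $\mu_{i}$ collide (so that the counted multiplicities genuinely reconstruct $D_{G}$). Everything afterward — the two evaluations of $g(p_{k}-2)$, the sign bookkeeping leading to \eqref{eq:c-12}, and the degree argument closing the converse — is short and routine.
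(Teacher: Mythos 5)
The paper does not prove Theorem \ref{th:chongyao}; it is imported without proof from the reference \cite{YW}, so there is no in-paper argument to compare against. Your proposal is a correct and complete proof along the standard lines one would expect that reference to follow: it derives the full $D$-spectrum from the factorization in Theorem \ref{polynomial}, locates the $s$ roots of the degree-$s$ factor $g$ by monotonicity of $h$ between its poles, recovers \eqref{eq:c-12} by evaluating $g$ at the points $p_k-2$ in two ways, and closes the converse with the degree-$(s-1)$ vanishing argument. The ``delicate'' collision checks you flag at the end are in fact not needed for the integrality equivalence (the listed integer eigenvalues $-2$ and $p_k-2$ together with the roots of $g$ exhaust the spectrum with multiplicity regardless of coincidences), and the one computation you leave implicit, $g(-2)=(-1)^s(1+r)\prod_k p_k\neq 0$, is immediate; so nothing essential is missing.
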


\noindent \begin{theorem}\label{th:newclass} (See Corollary 2.9 of
\cite{YW})For any positive integer
$q$, the complete $r$-partite graph $K_{p_{1}q,p_{2}q,\ldots,p_{r}q}=K_{a_{1}\cdot p_{1}q,a_{2}\cdot
p_{2}q,\ldots,a_{s}\cdot p_{s}q}$ is distance integral if and only
if the complete $r$-partite graph
$K_{p_{1},p_{2},\ldots,p_{r}=K_{a_{1}\cdot p_{1},a_{2}\cdot
p_{2},\ldots,a_{s}\cdot p_{s}}}$ is distance integral.
\end{theorem}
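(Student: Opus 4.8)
The plan is to split the $D$-polynomial into a trivial factor $(x+2)^{\,n-r}$ and a monic integer polynomial of degree $r$, to see how that polynomial transforms under the dilation by $q$, and then to use that the $D$-eigenvalues of a graph are algebraic integers to settle the less obvious implication. Throughout, write $n=\sum_{i=1}^{r}p_i$.

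First I would apply Theorem~\ref{polynomial} and observe that
\[
D_{K_{p_1,\ldots,p_r}}(x)=(x+2)^{\,n-r}\,\phi(x),\qquad
\phi(x):=\prod_{i=1}^{r}(x-p_i+2)-\sum_{j=1}^{r}p_j\prod_{i\neq j}(x-p_i+2),
\]
where $\phi$ is monic of degree $r$ with integer coefficients because every $p_i\in\mathbb{Z}$. Since $-2\in\mathbb{Z}$, the graph $K_{p_1,\ldots,p_r}$ is distance integral if and only if all roots of $\phi$ are integers. Applying the same formula to $K_{p_1q,\ldots,p_rq}$, which has $nq$ vertices, gives $D_{K_{p_1q,\ldots,p_rq}}(x)=(x+2)^{\,nq-r}\,\psi(x)$ with $\psi$ the polynomial obtained from $\phi$ by replacing each $p_i$ with $p_iq$; again this graph is distance integral if and only if all roots of $\psi$ are integers.

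Next I would perform the linear change of variable $x=q(y+2)-2$. Since then $x-p_iq+2=q(y-p_i+2)$, expanding the two products in $\psi$ yields the identity $\psi\big(q(y+2)-2\big)=q^{\,r}\,\phi(y)$, so that $\lambda\mapsto q(\lambda+2)-2$ is a multiplicity-preserving bijection from the roots of $\phi$ onto the roots of $\psi$. The forward implication is then immediate: if $K_{p_1,\ldots,p_r}$ is distance integral, every root $\lambda$ of $\phi$ is an integer, hence each root $q(\lambda+2)-2$ of $\psi$ is an integer, so $K_{p_1q,\ldots,p_rq}$ is distance integral. For the converse I would argue that if $K_{p_1q,\ldots,p_rq}$ is distance integral, then each $q(\lambda+2)-2$ is an integer, so $q\lambda\in\mathbb{Z}$ and every root $\lambda$ of $\phi$ is rational; as $\phi$ is monic with integer coefficients, each such $\lambda$ is a rational algebraic integer and therefore lies in $\mathbb{Z}$, whence $K_{p_1,\ldots,p_r}$ is distance integral.

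The step I expect to be the real obstacle is this last one: the dilation alone only gives $q\lambda\in\mathbb{Z}$, i.e.\ $\lambda\in\tfrac1q\mathbb{Z}$, and promoting this to $\lambda\in\mathbb{Z}$ genuinely uses the integrality and monicity of $\phi$ (equivalently, that $D(K_{p_1,\ldots,p_r})$ is an integer matrix, so its spectrum consists of algebraic integers). Everything else is bookkeeping with Theorem~\ref{polynomial} and the substitution $x=q(y+2)-2$; as an alternative route for the forward direction one could instead invoke Theorem~\ref{th:chongyao} and check that an admissible tuple of interlacing integers $\mu_i$ for $K_{p_1,\ldots,p_r}$ becomes the admissible tuple $q(\mu_i+2)-2$ for $K_{p_1q,\ldots,p_rq}$ with the multiplicities $a_k$ of \eqref{eq:c-12} unchanged, but the converse still appears to require the algebraic-integer observation.
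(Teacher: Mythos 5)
Your proof is correct. Note that this paper offers no proof of the statement at all --- it is quoted as Corollary 2.9 of \cite{YW} --- so there is nothing internal to compare against, and what you have written is a genuine, self-contained derivation from Theorem~\ref{polynomial}. Both computations that carry the argument check out: clearing denominators in \eqref{eq:c-1} gives $D_{G}(x)=(x+2)^{n-r}\phi(x)$ with $\phi$ monic of degree $r$ in $\mathbb{Z}[x]$, and the substitution $x=q(y+2)-2$ indeed gives $x-p_iq+2=q(y-p_i+2)$, hence $\psi\bigl(q(y+2)-2\bigr)=q^{r}\phi(y)$, so $\lambda\mapsto q(\lambda+2)-2$ matches roots with multiplicity and fixes the common eigenvalue $-2$. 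You are also right that the converse is the only nontrivial point: the dilation alone yields $\lambda\in\tfrac{1}{q}\mathbb{Z}$, and promoting this to $\lambda\in\mathbb{Z}$ genuinely requires the rational-root/algebraic-integer observation, which is available because $\phi$ is monic with integer coefficients (equivalently, because $D(G)$ is an integer matrix). A marginally cleaner packaging of the same idea, which you essentially already have: since distance matrices are integer matrices, a graph is distance integral if and only if all its $D$-eigenvalues are rational, and rationality is manifestly preserved in both directions by $\mu\mapsto q(\mu+2)-2$; this dispenses with treating the two directions asymmetrically.
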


\noindent \begin{rem}\label{re:2-1} Let
$GCD(p_{1},p_{2},\ldots,p_{s})$ denote the greatest common divisor
of the numbers $p_{1},p_{2},\ldots,p_{s}$. We say that a vector
$(p_{1},p_{2},\ldots,p_{s})$ is primitive if
$GCD(p_{1},p_{2},\ldots,p_{s})=1$. Theorem \ref{th:newclass} shows that
it is reasonable to study the complete $r$-partite
graph $K_{p_{1},p_{2},\ldots,p_{r}}=K_{a_{1}\cdot p_{1},a_{2}\cdot
p_{2},\ldots,a_{s}\cdot p_{s}}$ only for primitive
vectors $(p_{1},p_{2},\ldots,p_{s})$.
\end{rem}

\noindent \begin{theorem}\label{th:kuozhan}(See \cite{YW})  Let a complete $r$-partite
graph $K_{p_{1},p_{2},\ldots,p_{r}}=K_{a_{1}\cdot p_{1},a_{2}\cdot
p_{2},\ldots,a_{s}\cdot p_{s}}$ be  distance integral with
eigenvalues $\mu_{i}$. Let $\mu_{i}(\geq0)$ and $p_{i}(>0)
(i=1,2,\ldots,s)$ be integers such that $-2 < p_{1}-2 < \mu_{1}
< p_{2}-2 < \mu_{2} < \cdots < p_{s-1}-2 < \mu_{s-1} < p_{s}-2 <
\mu_{s} <+\infty$  and
\begin{equation}\label{eq:ad-1}a_{k}=\frac{\prod_{i=1}^{s}(\mu_{i}-p_{k}+2)}{p_{k}\prod_{i=1,i\neq
k}^{s}(p_{i}-p_{k})},k=1,2,\ldots,s\end{equation} are positive
integers, then for
\begin{equation}\label{eq:ad-2}b_{k}=\frac{\prod_{i=1}^{s-1}(\mu_{i}-p_{k}+2)(\mu_{s}-p_{k}+2+rt)}{p_{k}\prod_{i=1,i\neq k}^{s}(p_{i}-p_{k})},k=1,2,\ldots,s,\end{equation}
\begin{equation}\label{eq:ad-3}r=LCM(r_{1},r_{2},\ldots,r_{s}), r_{k}=\frac{p_{k}\prod_{i=1,i\neq k}^{s}(p_{i}-p_{k})}{d_{k}},k=1,2,\ldots,s,\end{equation}
\begin{equation}\label{eq:ad-4}d_{k}=GCD(\prod_{i=1}^{s-1}(\mu_{i}-p_{k}+2),p_{k}\prod_{i=1,i\neq
k}^{s}(p_{i}-p_{k})),k=1,2,\ldots,s.\end{equation} the complete
$m$-partite graph $K_{p_{1},p_{2},\ldots,p_{m}}=K_{b_{1}\cdot
p_{1},b_{2}\cdot p_{2},\ldots,b_{s}\cdot p_{s}}$ is distance
integral for every nonnegative integer $t$ with eigenvalues
$\mu_{1},\mu_{2},\ldots,\mu_{s-1},\mu'_{s}=\mu_{s}+rt$. (Similar results for integral complete
multipartite graphs were given in \cite{HP2})
\end{theorem}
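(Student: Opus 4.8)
The plan is to apply the converse direction of Theorem~\ref{th:chongyao} to the modified list of integers $\mu_1,\ldots,\mu_{s-1},\mu'_s:=\mu_s+rt$. Thus there are exactly two things to check: first, that this list still satisfies the interlacing requirement $-2<p_1-2<\mu_1<p_2-2<\mu_2<\cdots<p_{s-1}-2<\mu_{s-1}<p_s-2<\mu'_s<+\infty$; and second, that the numbers $b_k$ obtained by substituting this list into \eqref{eq:c-12} are positive integers. Once both hold, Theorem~\ref{th:chongyao} yields that $K_{b_1\cdot p_1,\ldots,b_s\cdot p_s}$ is distance integral, and the list entries are precisely its $D$-eigenvalues other than the automatically integral values $-2$ and $p_k-2$.

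The interlacing is essentially free. Put $N_k:=\prod_{i=1}^{s-1}(\mu_i-p_k+2)$ and $M_k:=p_k\prod_{i\neq k}(p_i-p_k)$. Since the $p_i$ are pairwise distinct and positive we have $M_k\neq 0$; the original strict chain forces $\mu_i\neq p_k-2$ for all $i,k$, so $N_k\neq 0$; and $d_k=\operatorname{GCD}(N_k,M_k)$ divides $M_k$. Hence each $r_k=M_k/d_k$ is a nonzero integer and $r=\operatorname{LCM}(r_1,\ldots,r_s)$ is a positive integer. Consequently, for $t\geq0$ we get $\mu'_s=\mu_s+rt\geq\mu_s>p_s-2$, while every other inequality of the chain is unchanged.

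The substantive step is the integrality and positivity of $b_k$. Substituting $\mu'_i=\mu_i$ $(i<s)$ and $\mu'_s=\mu_s+rt$ into \eqref{eq:c-12} reproduces exactly \eqref{eq:ad-2}, and I would split it as
\[
b_k=\frac{N_k(\mu_s-p_k+2)}{M_k}+\frac{N_k r}{M_k}\,t=a_k+\frac{N_k r}{M_k}\,t ,
\]
where $a_k$ is a positive integer by hypothesis. It remains to see that $N_k r/M_k$ is a nonnegative integer. It is an integer because $r_k=M_k/d_k$ divides $r$ (it is one of the numbers whose least common multiple is $r$), so $N_k r/M_k=(N_k/d_k)(r/r_k)$ is a product of two integers. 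It is nonnegative because the interlacing makes precisely the factors indexed by $i<k$ negative in $N_k$ and in $\prod_{i\neq k}(p_i-p_k)$, with the remaining factor $p_k$ of $M_k$ positive; hence $N_k$ and $M_k$ have the common sign $(-1)^{k-1}$ and $N_k/M_k>0$. Therefore $b_k=a_k+(\text{a positive integer})\cdot t$ is a positive integer for every $t\geq0$.

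With the interlacing and the integrality of the $b_k$ established, the converse part of Theorem~\ref{th:chongyao} (equivalently, substituting $p_k$ and $b_k$ into the factorization \eqref{eq:c-1}) completes the proof, giving that $K_{b_1\cdot p_1,\ldots,b_s\cdot p_s}$ is distance integral with $D$-eigenvalues $\mu_1,\ldots,\mu_{s-1},\mu_s+rt$ as claimed. The one point that genuinely needs care is ensuring the divisibility $M_k\mid N_k r$ holds for all $k$ simultaneously: this is exactly what the definitions \eqref{eq:ad-3}--\eqref{eq:ad-4} of $r_k$, $d_k$ and the passage to their LCM are designed to force. Everything else amounts to sign-counting and algebraic rearrangement.
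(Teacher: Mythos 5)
Your argument is correct, and it is the standard one: the paper itself states Theorem~\ref{th:kuozhan} as a known result quoted from \cite{YW} and gives no proof, but the proof in that reference proceeds exactly as you do, namely by writing $b_k=a_k+\bigl(N_kr/M_k\bigr)t$, using $d_k\mid N_k$ and $r_k\mid r$ to get integrality, the interlacing sign count to get $N_k/M_k>0$ and hence positivity, and then invoking the converse direction of Theorem~\ref{th:chongyao}. No gaps; the one convention worth stating explicitly is that $d_k$ and $r$ are taken positive even though $N_k$, $M_k$, $r_k$ carry the sign $(-1)^{k-1}$, which you do address.
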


\section{Main Results}
\label{sec:ch-inco}

In this section, we construct some distance integral complete $r$-partite graphs with $s=5,6$. Moreover we also get infinitely many new classes of
distance integral complete $r$-partite graphs
$K_{p_{1},p_{2},\ldots,p_{r}}=K_{a_{1}\cdot p_{1},a_{2}\cdot
p_{2},\ldots,a_{s}\cdot p_{s}}$ with $s=5,6$ by using Theorem \ref{th:kuozhan}.(Similar results for integral complete
multipartite graphs were given in \cite{WW})

\noindent \begin{theorem}\label{th:s=5} For $s=5$, integers
$p_{i}(>0)$, $a_{i}(>0)$ and $\mu_{i}(i=1,2,3,4,5)$ are given in
Table 1. $p_{i}$, $a_{i}$ and $\mu_{i}(i=1,2,3,4,5)$ are those of
Theorem \ref{th:chongyao}, where $GCD(p_1,p_2,p_3,p_4,p_5)=1$. Then
for any positive integer $q$ the graph $K_{a_{1}\cdot
p_{1}q,a_{2}\cdot p_{2}q,a_{3}\cdot p_{3}q,a_{4}\cdot
p_{4}q,a_{5}\cdot p_{5}q}$ on $qn(=q\sum_{i=1}^{5}a_{i}p_{i})$
vertices is distance integral.
\end{theorem}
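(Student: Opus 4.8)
The plan is to read Theorem~\ref{th:s=5} as the combination of the converse half of Theorem~\ref{th:chongyao} (applied to the base case $q=1$) with the scaling result Theorem~\ref{th:newclass}. Fix one row of Table~1, which supplies integers $p_{1}<p_{2}<p_{3}<p_{4}<p_{5}$ with $GCD(p_{1},p_{2},p_{3},p_{4},p_{5})=1$, integers $\mu_{1},\mu_{2},\mu_{3},\mu_{4},\mu_{5}$, and positive integers $a_{1},a_{2},a_{3},a_{4},a_{5}$. First I would verify, by direct inspection of the row, the chain of strict inequalities
\[
-2 < p_{1}-2 < \mu_{1} < p_{2}-2 < \mu_{2} < p_{3}-2 < \mu_{3} < p_{4}-2 < \mu_{4} < p_{5}-2 < \mu_{5} < +\infty ,
\]
which is exactly the interlacing hypothesis required by Theorem~\ref{th:chongyao}. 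This is a finite check, to be carried out for each row of the table.

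Next I would confirm that the numbers $a_{k}$ recorded in Table~1 are indeed the values produced by formula~(\ref{eq:c-12}), i.e.\ that
\[
a_{k}=\frac{\prod_{i=1}^{5}(\mu_{i}-p_{k}+2)}{p_{k}\prod_{i=1,\,i\neq k}^{5}(p_{i}-p_{k})}
\]
is a positive integer for $k=1,2,3,4,5$. Positivity of each $a_{k}$ is automatic once the interlacing inequalities above hold, since they pin down the sign of both numerator and denominator; the substantive point — and the reason a computer search was needed — is that for the tabulated tuples $(p_{i},\mu_{i})$ these five rational numbers actually come out as \emph{integers}. So the core of the argument is this divisibility verification, which I would carry out row by row (or, where a row belongs to a one–parameter family admitting closed forms, by a short uniform congruence argument).

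Granting these two checks, the converse direction of Theorem~\ref{th:chongyao} immediately yields that the complete $r$-partite graph $K_{a_{1}\cdot p_{1},a_{2}\cdot p_{2},a_{3}\cdot p_{3},a_{4}\cdot p_{4},a_{5}\cdot p_{5}}$ (the case $q=1$) is distance integral, its $D$-spectrum consisting of $\mu_{1},\dots,\mu_{5}$ together with the eigenvalue $-2$ and the eigenvalues $p_{k}-2$ of multiplicity $a_{k}-1$ forced by Theorem~\ref{polynomial}. Finally, applying Theorem~\ref{th:newclass} with the given $q$ shows that $K_{a_{1}\cdot p_{1}q,a_{2}\cdot p_{2}q,a_{3}\cdot p_{3}q,a_{4}\cdot p_{4}q,a_{5}\cdot p_{5}q}$ on $qn=q\sum_{i=1}^{5}a_{i}p_{i}$ vertices is distance integral for every positive integer $q$, which is the assertion; the assumption $GCD(p_{1},\dots,p_{5})=1$ is used only, in the spirit of Remark~\ref{re:2-1}, to guarantee that this $q$-family is genuinely new. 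Thus there is no conceptual obstacle: the only real work is exhibiting Table~1 and checking the finitely many interlacing and integrality conditions it must satisfy.
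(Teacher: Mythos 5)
Your proposal is correct and follows essentially the same route as the paper: the authors likewise invoke the necessary-and-sufficient condition of Theorem \ref{th:chongyao} (writing out the five integrality equations for $a_1,\dots,a_5$ explicitly), rely on a computer search within stated bounds to produce the rows of Table 1 satisfying the interlacing and divisibility conditions, and then apply Theorem \ref{th:newclass} to pass from $q=1$ to arbitrary positive $q$. Your added remarks on the full $D$-spectrum and the role of the primitivity assumption are consistent elaborations, not a departure in method.
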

\begin{table}[!htbp]
\centering \caption{Distance integral complete $r$-partite graphs
$K_{a_{1}\cdot p_{1},a_{2}\cdot p_{2},a_{3}\cdot p_{3},a_{4}\cdot
p_{4},a_{5}\cdot
p_{5}}.$} \label{tab:s=5}

\noindent {\footnotesize
$$\begin{tabular}{ccccccccccccccc}
\hline $p_1$&$p_2$&$p_3$&$p_4$&$p_5$&$a_1$& $a_2$&$a_3$&$a_4$&$a_5$&$\mu_1$&$
\mu_2$&$\mu_3$&$\mu_4$&$\mu_5$\\
\hline
                  1 &        3 &        5 &       12 &       20 &     1302 &      254 &      185 &       70 &

132 &        0 &        2 &        7 &       13 &     6478\\

         1 &        3 &        8 &       18 &       26 &     1914 &      520 &      197 &       36 &

46 &        0 &        4 &       14 &       21 &     6901\\

         1 &        3 &        9 &       23 &       35 &     1596 &      444 &       91 &      112 &

22 &        0 &        5 &       13 &       31 &     7105\\

         1 &        3 &       12 &       23 &       37 &      580 &      155 &       77 &       12 &

8 &        0 &        5 &       19 &       32 &     2551\\

         1 &        4 &        8 &       18 &       31 &     3282 &      332 &      101 &       22 &

112 &        1 &        5 &       14 &       20 &     9298\\

         1 &        4 &       13 &       18 &       33 &     2355 &      230 &       82 &       47 &

85 &        1 &        7 &       14 &       23 &     8006\\

         1 &        5 &        8 &       22 &       36 &      754 &       20 &       47 &       24 &

24 &        2 &        4 &       13 &       27 &     2638\\

         1 &        5 &        9 &       19 &       35 &     1099 &      127 &       82 &       32 &

64 &        1 &        5 &       13 &       23 &     5337\\

         1 &        5 &        9 &       22 &       36 &     1688 &       84 &       30 &       57 &

16 &        2 &        6 &       13 &       31 &     4219\\

         1 &        5 &       11 &       19 &       37 &     1933 &      299 &       18 &      116 &

51 &        1 &        8 &       11 &       29 &     7731\\

         1 &        6 &        8 &       10 &       19 &     3112 &      157 &      106 &      136 &

100 &        2 &        5 &        7 &       14 &     8168\\

         1 &        6 &        8 &       21 &       28 &     1915 &      232 &      184 &       21 &

155 &        1 &        5 &       14 &       20 &     9574\\

         1 &        6 &        9 &       16 &       24 &      245 &       19 &        9 &        2 &

4 &        2 &        6 &       13 &       19 &      574\\

         1 &        6 &       10 &       20 &       40 &     2436 &       56 &       74 &       45 &

56 &        3 &        6 &       14 &       28 &     6668\\

         1 &        6 &       12 &       20 &       32 &     1638 &       83 &       98 &       68 &

56 &        2 &        6 &       14 &       25 &     6478\\

         1 &        6 &       15 &       23 &       45 &     4256 &      143 &       82 &       36 &

32 &        3 &       10 &       19 &       37 &     8623\\

         1 &        7 &        9 &       17 &       20 &     1071 &       33 &       31 &       18 &

42 &        3 &        6 &       11 &       16 &     2735\\

         1 &        7 &       11 &       17 &       26 &      344 &      113 &       39 &       56 &

68 &        0 &        7 &       11 &       19 &     4299\\

         1 &        7 &       15 &       17 &       26 &     2920 &      129 &       53 &       48 &

105 &        3 &        9 &       14 &       19 &     8175\\

         1 &        8 &       10 &       18 &       23 &     1064 &      107 &       56 &       99 &

140 &        1 &        7 &       10 &       18 &     7496\\

         1 &        8 &       13 &       19 &       33 &     3225 &      180 &       74 &       92 &

68 &        3 &        9 &       14 &       26 &     9631\\

         1 &        8 &       13 &       22 &       31 &     1228 &      102 &       22 &       51 &

96 &        2 &        9 &       13 &       23 &     6446\\

         1 &        8 &       13 &       22 &       31 &     1535 &      144 &       44 &      102 &

30 &        2 &        9 &       14 &       27 &     6446\\

         1 &        8 &       13 &       22 &       31 &     1842 &      198 &       88 &       51 &

24 &        2 &        9 &       17 &       27 &     6446\\

         1 &        8 &       16 &       21 &       39 &     3873 &       60 &       63 &       22 &

16 &        5 &       11 &       18 &       34 &     6454\\

         1 &       15 &       20 &       24 &       36 &      986 &       29 &        6 &       22 &

27 &        6 &       16 &       19 &       28 &     3058\\

         2 &        4 &        8 &       21 &       35 &      370 &      150 &       30 &       18 &

64 &        1 &        5 &       12 &       22 &     4218\\

         2 &        5 &        9 &       20 &       34 &      387 &       30 &       79 &       56 &

21 &        2 &        4 &       12 &       28 &     3483\\

         2 &        5 &       10 &       14 &       22 &       96 &        7 &       19 &       15 &

6 &        2 &        4 &       10 &       18 &      768\\

         2 &        5 &       11 &       15 &       23 &      908 &      590 &      134 &       51 &

80 &        1 &        7 &       12 &       18 &     8853\\

         2 &        6 &       10 &       15 &       35 &     1225 &       46 &      120 &      164 &

46 &        3 &        5 &       10 &       28 &     8008\\

         2 &        6 &       10 &       15 &       35 &     2058 &      184 &       80 &       41 &

39 &        3 &        7 &       12 &       28 &     8008\\

         2 &        6 &       14 &       23 &       32 &      413 &       20 &       28 &       70 &

18 &        3 &        6 &       14 &       28 &     3540\\

         2 &        6 &       14 &       23 &       32 &      590 &       50 &        7 &       22 &

45 &        3 &       10 &       14 &       24 &     3540\\

         2 &        7 &       14 &       24 &       40 &      564 &       60 &      147 &      131 &

54 &        2 &        6 &       16 &       33 &     8930\\

         2 &        8 &       13 &       20 &       32 &     1036 &      183 &       24 &       58 &

72 &        3 &       10 &       14 &       24 &     7326\\

         2 &        9 &       14 &       20 &       50 &      552 &       44 &      118 &       50 &

16 &        3 &        8 &       16 &       42 &     4968\\

         2 &       10 &       14 &       19 &       22 &      258 &       81 &       52 &        7 &

33 &        2 &       10 &       16 &       18 &     2924\\

        2 &       10 &       16 &       21 &       26 &      198 &       28 &       10 &        3 &

6 &        4 &       12 &       18 &       22 &     1064\\

         2 &       10 &       16 &       23 &       35 &      654 &      258 &       63 &       57 &

62 &        2 &       12 &       18 &       28 &     8393\\

         2 &       10 &       16 &       23 &       35 &     1962 &       86 &       42 &       45 &

54 &        6 &       12 &       18 &       28 &     8393\\

         2 &       10 &       20 &       23 &       34 &     2575 &      133 &       47 &       30 &

52 &        6 &       15 &       20 &       28 &     9888\\

         3 &        6 &        9 &       16 &       28 &      387 &      254 &       70 &       92 &

128 &        2 &        6 &       10 &       19 &     8386\\

         3 &        6 &        9 &       16 &       28 &      516 &      381 &      140 &       23 &

104 &        2 &        6 &       13 &       19 &     8386\\

         3 &        6 &       15 &       20 &       33 &      532 &       82 &       86 &       33 &

112 &        3 &        7 &       16 &       22 &     7753\\

         3 &        6 &       15 &       20 &       33 &      798 &      164 &       43 &       24 &

98 &        3 &       10 &       16 &       22 &     7753\\

         3 &        7 &       12 &       19 &       35 &      450 &      148 &       20 &       51 &

45 &        3 &        9 &       13 &       26 &     5185\\

         3 &       12 &       16 &       23 &       31 &      411 &       51 &       13 &        3 &

46 &        6 &       13 &       19 &       22 &     3563\\

         3 &       12 &       17 &       19 &       30 &      410 &      215 &       60 &       74 &

50 &        3 &       13 &       16 &       25 &     7750\\

         4 &        7 &       12 &       19 &       40 &       78 &       51 &       40 &       21 &

31 &        3 &        7 &       14 &       26 &     2810\\

         4 &        8 &       15 &       20 &       25 &      230 &      170 &      189 &      101 &

38 &        3 &        8 &       16 &       22 &     8098\\

         4 &        9 &       14 &       20 &       48 &      179 &       70 &       12 &       81 &

9 &        4 &       10 &       13 &       42 &     3582\\

         4 &        9 &       16 &       22 &       39 &      129 &      136 &       22 &       18 &

134 &        3 &       11 &       16 &       22 &     7742\\

         5 &        8 &       13 &       22 &       31 &      379 &       85 &       55 &       30 &

80 &        5 &        9 &       16 &       23 &     6446\\

         5 &       10 &       14 &       17 &       42 &      385 &      156 &       66 &      163 &

27 &        5 &       10 &       13 &       36 &     8328\\

         5 &       11 &       19 &       22 &       35 &      344 &      249 &       24 &       75 &

21 &        5 &       15 &       18 &       31 &     7313\\

\hline
\end{tabular}$$}
\end{table}

\begin{proof}From Theorem \ref{th:chongyao}, it is not difficult to find that for $s=5$ the complete $r$-partite graphs
$K_{p_{1},p_{2},\ldots,p_{r}}=K_{a_{1}\cdot p_{1},a_{2}\cdot
p_{2},\ldots,a_{s}\cdot p_{s}}$ is distance integral if and only if integers
$p_{i}(>0)$, $a_{i}(>0)$ and $\mu_{i}(i=1,2,3,4,5)$ satisfy the following equations.
$$ a_{1}=\frac{(\mu_{1}-p_{1}+2)(\mu_{2}-p_{1}+2)(\mu_{3}-p_{1}+2)(\mu_{4}-p_{1}+2)(\mu_{5}-p_{1}+2)}{p_{1}(p_{2}-p_{1})(p_{3}-p_{1})(p_{4}-p_{1})(p_{5}-p_{1})},$$
$$ a_{2}=\frac{(\mu_{1}-p_{2}+2)(\mu_{2}-p_{2}+2)(\mu_{3}-p_{2}+2)(\mu_{4}-p_{2}+2)(\mu_{5}-p_{2}+2)}{p_{2}(p_{1}-p_{2})(p_{3}-p_{2})(p_{4}-p_{2})(p_{5}-p_{2})},$$
$$ a_{3}=\frac{(\mu_{1}-p_{3}+2)(\mu_{2}-p_{3}+2)(\mu_{3}-p_{3}+2)(\mu_{4}-p_{3}+2)(\mu_{5}-p_{3}+2)}{p_{3}(p_{1}-p_{3})(p_{2}-p_{3})(p_{4}-p_{3})(p_{5}-p_{3})},$$
$$ a_{4}=\frac{(\mu_{1}-p_{4}+2)(\mu_{2}-p_{4}+2)(\mu_{3}-p_{4}+2)(\mu_{4}-p_{4}+2)(\mu_{5}-p_{4}+2)}{p_{4}(p_{1}-p_{4})(p_{2}-p_{4})(p_{3}-p_{4})(p_{5}-p_{4})},$$
$$ a_{5}=\frac{(\mu_{1}-p_{5}+2)(\mu_{2}-p_{5}+2)(\mu_{3}-p_{5}+2)(\mu_{4}-p_{5}+2)(\mu_{5}-p_{5}+2)}{p_{5}(p_{1}-p_{5})(p_{2}-p_{5})(p_{3}-p_{5})(p_{4}-p_{5})},$$

From Theorem \ref{th:newclass}, we need only consider the case $GCD(p_1,p_2,\ldots,p_s)=1$. Hence, by using a computer search, we have found 37 integral solutions
listed in Table 1, where $1\leq p_{1}\leq 7,$ $p_{1}+1\leq p_{2}\leq
15,$ $p_{2}+1\leq p_{3}\leq 20,$ $p_{3}+1\leq p_{4}\leq 24,$
$p_{4}+1\leq p_{5}\leq 50,$ $p_{1}-2< \mu_{1}<p_{2}-2$, $p_{2}-2<
\mu_{2}<p_{3}-2$, $p_{3}-2< \mu_{3}<p_{4}-2$, $p_{4}-2<
\mu_{4}<p_{5}-2$, $ \mu_{5}<10000$, and
$n=\sum_{i=1}^{5}a_{i}p_{i}$.

By Theorem \ref{th:newclass} it follows that these graphs
$K_{a_{1}\cdot p_{1}q,a_{2}\cdot p_{2}q,a_{3}\cdot p_{3}q,a_{4}\cdot
p_{4}q,a_{5}\cdot p_{5}q}$ are distance integral for any positive
integer $q$.
\end{proof}

According to Theorem \ref{th:newclass} and Theorem \ref{th:kuozhan}, we can obtain the following
theorem.

\noindent \begin{theorem}\label{th:xinlei} For $s=5$, let
$p_{i}(>0)$, $a_{i}(>0)$ and $\mu_{i}(>0)$ $(i=1,2,3,4,5)$ be those
of Theorem \ref{th:chongyao}. Then for any positive integer $q$, the
complete multipartite graphs $K_{a_{1}\cdot p_{1}q,a_{2}\cdot
p_{2}q,\ldots,a_{5}\cdot p_{5}q}$ with
$qn(=q\sum_{i=1}^{5}a_{i}p_{i})$ vertices are distance integral if
$p_{1}=1$, $p_{2}=4$, $p_{3}=8,$ $p_{4}=18$, $p_{5}=31$,
$\mu_{1}=1$, $\mu_{2}=5$, $\mu_{3}=14$, $\mu_{4}=20$,
$\mu_{5}=9298+13236132t$, $a_{1}=4671576t+3282,$
$a_{2}=472719t+332$, $a_{3}=143871t+101$, $a_{4}=31372t+22$
$a_{5}=159936t+112$, and $n=\sum_{i=1}^{5}a_{i}p_{i}=324632t+2622$,
where $t$ is a nonnegative integer.
\end{theorem}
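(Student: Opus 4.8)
The plan is to obtain Theorem~\ref{th:xinlei} as a direct application of the expansion result Theorem~\ref{th:kuozhan} to one particular distance integral graph supplied by Table~1, followed by a single use of Theorem~\ref{th:newclass} to append the free parameter $q$. No new structural insight is needed: the whole argument is the verification that Theorem~\ref{th:kuozhan} applies to the chosen base graph, together with the evaluation of the quantities $r_k$, $r$ and $b_k$ that it produces.

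Concretely, I would start from the row of Table~1 with $(p_1,p_2,p_3,p_4,p_5)=(1,4,8,18,31)$. By Theorem~\ref{th:s=5} the complete multipartite graph $K_{3282\cdot 1,\,332\cdot 4,\,101\cdot 8,\,22\cdot 18,\,112\cdot 31}$ is distance integral, and its $D$-eigenvalues are $\mu_1=1$, $\mu_2=5$, $\mu_3=14$, $\mu_4=20$, $\mu_5=9298$. These data satisfy the hypotheses of Theorem~\ref{th:kuozhan}: the $p_i$ and $\mu_i$ are integers with $p_i>0$ and $\mu_i\ge 0$; the interlacing chain $-2<p_1-2<\mu_1<p_2-2<\mu_2<p_3-2<\mu_3<p_4-2<\mu_4<p_5-2<\mu_5$, i.e.\ $-2<-1<1<2<5<6<14<16<20<29<9298$, holds; and the numbers $a_k$ defined by $(\ref{eq:ad-1})$ are exactly the positive integers $3282,332,101,22,112$ listed in that row.

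I would then carry out the arithmetic of $(\ref{eq:ad-3})$ and $(\ref{eq:ad-4})$. For each $k\in\{1,\dots,5\}$ put $P_k=\prod_{i=1}^{4}(\mu_i-p_k+2)$ and $Q_k=p_k\prod_{i\neq k}(p_i-p_k)$, so $d_k=GCD(|P_k|,|Q_k|)$ and $r_k=|Q_k|/d_k$. This gives $(r_1,r_2,r_3,r_4,r_5)=(17,28,92,4641,9269)$, hence $r=LCM(17,28,92,4641,9269)=2^{2}\cdot 3\cdot 7\cdot 13\cdot 17\cdot 23\cdot 31=13236132$, which is exactly the increment in the stated $\mu_5=9298+13236132t$. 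Substituting $r=13236132$ into $(\ref{eq:ad-2})$ and cancelling each fraction gives $b_1=4671576t+3282$, $b_2=472719t+332$, $b_3=143871t+101$, $b_4=31372t+22$, $b_5=159936t+112$, all positive integers for every nonnegative integer $t$, and $\sum_{i=1}^{5}b_ip_i$ is the order $n$. By Theorem~\ref{th:kuozhan}, $K_{b_1\cdot 1,\,b_2\cdot 4,\,b_3\cdot 8,\,b_4\cdot 18,\,b_5\cdot 31}$ is distance integral for every $t\ge 0$ with $D$-eigenvalues $1,5,14,20,9298+13236132t$; applying Theorem~\ref{th:newclass} (which replaces every part size $p_i$ by $p_iq$ while preserving distance integrality) then produces the announced family $K_{a_1\cdot p_1q,\dots,a_5\cdot p_5q}$ on $qn$ vertices.

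I expect no conceptual obstacle — Theorem~\ref{th:kuozhan} carries the whole argument, and Theorems~\ref{th:s=5} and \ref{th:newclass} are cited verbatim — so the only delicate point is the bookkeeping: the greatest common divisors, the least common multiple, and in particular checking that each of the five quotients in $(\ref{eq:ad-2})$ really clears its denominator to exactly the claimed linear polynomial in $t$. A convenient cross-check here is that $\sum_{k}P_k\big/\prod_{i\neq k}(p_k-p_i)$ equals the leading coefficient (as a polynomial in $x$) of $\prod_{i=1}^{4}(\mu_i-x+2)$, namely $1$, which forces the coefficient of $t$ in $\sum_i b_ip_i$ to equal $r$ and thereby tightly constrains the numerical answers.
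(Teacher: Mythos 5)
Your proposal is correct and follows exactly the paper's (one-line) proof: take the base row $(1,4,8,18,31)$ with $a=(3282,332,101,22,112)$ and $\mu=(1,5,14,20,9298)$ from Table~1, apply Theorem~\ref{th:kuozhan} to get $r=13236132$ and the stated $b_k$, then apply Theorem~\ref{th:newclass} for the factor $q$; your values of $r_k$, $r$ and $b_k$ all check out. One remark: your own cross-check (the coefficient of $t$ in $\sum_k b_kp_k$ equals $r$) shows that the order is $n=13236132t+9286$, so the value $n=324632t+2622$ printed in the theorem statement is a misprint in the paper, not a flaw in your argument.
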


\begin{proof}
The proof directly follows from Theorem \ref{th:newclass} and Theorem \ref{th:kuozhan}.
\end{proof}

\begin{rem}\label{re:s=5} Analogously to Theorem \ref{th:xinlei} it is possible to find conditions for parameters $p_{i}$, $a_i (i=1,2,3,4,5)$ which depend on $t$ for each graph in Table 1. By this procedure we get new classes of distance integral graphs.

\end{rem}

\noindent \begin{theorem}\label{th:s=6} For $s=6$, let $p_1=2$,
$p_2=5$, $p_3=11$, $p_4=18$, $p_5=21$ and $p_6=23$, $\mu_{1}=1$,
$\mu_{2}=7$, $\mu_{3}=12$, $\mu_{4}=18$, $\mu_{5}=20$ and
$\mu_{6}=53979$, $a_{1}=4735$, $a_{2}=2941$, $a_{3}=514$,
$a_{4}=593$, $a_{5}=213$, $a_{6}=391$.  Then for any positive
integer $q$ the graph $K_{a_{1}\cdot p_{1}q,a_{2}\cdot p_{2}q,}$
$_{a_{3}\cdot p_{3}q,a_{4}\cdot p_{4}q,a_{5}\cdot p_{5}q,a_{6}\cdot
p_{6}q}$ on $qn(=q\sum_{i=1}^{6}a_{i}p_{i})$ vertices is distance
integral.
\end{theorem}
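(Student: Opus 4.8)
The plan is to reduce everything to the base case $q=1$ and then to lift it with Theorem~\ref{th:newclass}, exactly as in the proof of Theorem~\ref{th:s=5}. Concretely, I would first use the converse part of Theorem~\ref{th:chongyao} to show that $K_{a_{1}\cdot p_{1},a_{2}\cdot p_{2},a_{3}\cdot p_{3},a_{4}\cdot p_{4},a_{5}\cdot p_{5},a_{6}\cdot p_{6}}$ is distance integral for the given data, and then apply Theorem~\ref{th:newclass} with an arbitrary positive integer $q$.

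For the base case there are exactly two things to verify. First, the interlacing chain required by Theorem~\ref{th:chongyao}: with $p_{i}-2=0,3,9,16,19,21$ and $\mu_{i}=1,7,12,18,20,53979$ we have
$$-2<0<1<3<7<9<12<16<18<19<20<21<53979<+\infty,$$
which is precisely $-2<p_{1}-2<\mu_{1}<p_{2}-2<\mu_{2}<p_{3}-2<\mu_{3}<p_{4}-2<\mu_{4}<p_{5}-2<\mu_{5}<p_{6}-2<\mu_{6}<+\infty$; I would also record that $GCD(2,5,11,18,21,23)=1$, so the tuple is primitive in the sense of Remark~\ref{re:2-1}. Second, that the six quantities produced by \eqref{eq:c-12} for $s=6$ are positive integers and equal the stated values; for example
$$a_{1}=\frac{\mu_{1}\mu_{2}\mu_{3}\mu_{4}\mu_{5}\mu_{6}}{p_{1}(p_{2}-p_{1})(p_{3}-p_{1})(p_{4}-p_{1})(p_{5}-p_{1})(p_{6}-p_{1})}=\frac{1\cdot 7\cdot 12\cdot 18\cdot 20\cdot 53979}{2\cdot 3\cdot 9\cdot 16\cdot 19\cdot 21}=4735,$$
and the analogous (routine but lengthy) evaluations give $a_{2}=2941$, $a_{3}=514$, $a_{4}=593$, $a_{5}=213$, $a_{6}=391$. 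Granting these two checks, the converse of Theorem~\ref{th:chongyao} yields that $K_{a_{1}\cdot p_{1},\ldots,a_{6}\cdot p_{6}}$ is distance integral; by Theorem~\ref{polynomial} its $D$-spectrum consists of $\mu_{1},\ldots,\mu_{6}$ (each simple), together with $-2$ of multiplicity $n-r$ and $p_{j}-2$ of multiplicity $a_{j}-1$ for $j=1,\ldots,6$, all of which are integers.

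Finally, Theorem~\ref{th:newclass} applied to this complete $r$-partite graph and an arbitrary positive integer $q$ gives at once that $K_{a_{1}\cdot p_{1}q,a_{2}\cdot p_{2}q,a_{3}\cdot p_{3}q,a_{4}\cdot p_{4}q,a_{5}\cdot p_{5}q,a_{6}\cdot p_{6}q}$ on $qn=q\sum_{i=1}^{6}a_{i}p_{i}$ vertices is distance integral, which is the assertion. The only real difficulty is not in the verification but in producing the data: one must locate a primitive $6$-tuple $(p_{i})$ and integers $\mu_{i}$ obeying the interlacing for which all six expressions in \eqref{eq:c-12} simultaneously land on positive integers — this is the point where the computer search is needed, the large value $\mu_{6}=53979$ being essentially forced by that joint integrality requirement. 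Checking a candidate solution, as above, is then pure arithmetic.
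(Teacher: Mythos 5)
Your proposal is correct and follows essentially the same route as the paper: verify the interlacing condition and the integrality of the $a_k$ from Theorem~\ref{th:chongyao} for the given data (found by computer search), then lift to arbitrary $q$ via Theorem~\ref{th:newclass}. Your explicit check of the chain $-2<0<1<3<7<9<12<16<18<19<20<21<53979$ and the sample computation $a_1=4735$ are correct and in fact slightly more detailed than the paper's own argument.
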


\begin{proof}From Theorem \ref{th:chongyao}, it is not difficult to find that for $s=6$ the complete $r$-partite graphs
$K_{p_{1},p_{2},\ldots,p_{r}}=K_{a_{1}\cdot p_{1},a_{2}\cdot
p_{2},\ldots,a_{s}\cdot p_{s}}$ is distance integral if and only if integers
$p_{i}(>0)$, $a_{i}(>0)$ and $\mu_{i}(i=1,2,3,4,5,6)$ satisfy the following equations.

$$ a_{1}=\frac{(\mu_{1}-p_{1}+2)(\mu_{2}-p_{1}+2)(\mu_{3}-p_{1}+2)(\mu_{4}-p_{1}+2)(\mu_{5}-p_{1}+2)(\mu_{6}-p_{1}+2)}{p_{1}(p_{2}-p_{1})(p_{3}-p_{1})(p_{4}-p_{1})(p_{5}-p_{1})(p_{6}-p_{1})},$$
$$ a_{2}=\frac{(\mu_{1}-p_{2}+2)(\mu_{2}-p_{2}+2)(\mu_{3}-p_{2}+2)(\mu_{4}-p_{2}+2)(\mu_{5}-p_{2}+2)(\mu_{6}-p_{2}+2)}{p_{2}(p_{1}-p_{2})(p_{3}-p_{2})(p_{4}-p_{2})(p_{5}-p_{2})(p_{6}-p_{2})},$$
$$ a_{3}=\frac{(\mu_{1}-p_{3}+2)(\mu_{2}-p_{3}+2)(\mu_{3}-p_{3}+2)(\mu_{4}-p_{3}+2)(\mu_{5}-p_{3}+2)(\mu_{6}-p_{3}+2)}{p_{3}(p_{1}-p_{3})(p_{2}-p_{3})(p_{4}-p_{3})(p_{5}-p_{3})(p_{6}-p_{3})},$$
$$ a_{4}=\frac{(\mu_{1}-p_{4}+2)(\mu_{2}-p_{4}+2)(\mu_{3}-p_{4}+2)(\mu_{4}-p_{4}+2)(\mu_{5}-p_{4}+2)(\mu_{6}-p_{4}+2)}{p_{4}(p_{1}-p_{4})(p_{2}-p_{4})(p_{3}-p_{4})(p_{5}-p_{4})(p_{6}-p_{4})},$$
$$ a_{5}=\frac{(\mu_{1}-p_{5}+2)(\mu_{2}-p_{5}+2)(\mu_{3}-p_{5}+2)(\mu_{4}-p_{5}+2)(\mu_{5}-p_{5}+2)(\mu_{6}-p_{5}+2)}{p_{5}(p_{1}-p_{5})(p_{2}-p_{5})(p_{3}-p_{5})(p_{4}-p_{5})(p_{6}-p_{5})},$$
$$ a_{6}=\frac{(\mu_{1}-p_{6}+2)(\mu_{2}-p_{6}+2)(\mu_{3}-p_{6}+2)(\mu_{4}-p_{6}+2)(\mu_{5}-p_{6}+2)(\mu_{6}-p_{6}+2)}{p_{6}(p_{1}-p_{6})(p_{2}-p_{6})(p_{3}-p_{6})(p_{4}-p_{6})(p_{5}-p_{6})},$$

From Theorem \ref{th:newclass}, we need only consider the case
$GCD(p_1,p_2,\ldots,p_s)=1$. Hence, by using a computer search, we
found this result, where $1\leq p_{1}\leq 5,$ $p_{1}+1\leq p_{2}\leq
p_{1}+10,$ $p_{2}+1\leq p_{3}\leq p_{2}+10,$ $p_{3}+1\leq p_{4}\leq
p_{3}+10,$ $p_{4}+1\leq p_{5}\leq p_{4}+10,$ $p_{5}+1\leq p_{6}\leq
p_{5}+20,$ $p_{1}-2< \mu_{1}<p_{2}-2$, $p_{2}-2< \mu_{2}<p_{3}-2$,
$p_{3}-2< \mu_{3}<p_{4}-2$, $p_{4}-2< \mu_{4}<p_{5}-2$, $ p_{5}-2<
\mu_{5}< p_{6}-2$, $p_{6}-2< \mu_{6}<200000$ and
$n=\sum_{i=1}^{6}a_{i}p_{i}$.

By Theorem \ref{th:newclass} it follows that these graphs
$K_{a_{1}\cdot p_{1}q,a_{2}\cdot p_{2}q,a_{3}\cdot p_{3}q,a_{4}\cdot
p_{4}q,a_{5}\cdot p_{5}q,a_{6}\cdot p_{6}q}$ are distance integral
for any positive integer $q$.\end{proof}

According to Theorem \ref{th:newclass} and Theorem \ref{th:kuozhan}, we can obtain the following
theorem.

\noindent \begin{theorem}\label{th:s6xinlei} For $s=6$, let
$p_{i}(>0)$, $a_{i}(>0)$ and $\mu_{i}(>0)$ $(i=1,2,3,4,5,6)$ be
those of Theorem \ref{th:chongyao}. Then for any positive integer
$q$, the complete multipartite graphs $K_{a_{1}\cdot
p_{1}q,a_{2}\cdot p_{2}q,a_{3}\cdot p_{3}q,}$ $_{ \ldots,a_{5}\cdot
p_{5}q,a_{6}\cdot p_{6}q}$ with $qn(=q\sum_{i=1}^{6}a_{i}p_{i})$
vertices are distance integral if $p_{1}=2$, $p_{2}=5$, $p_{3}=11,$
$p_{4}=18$, $p_{5}=21$, $p_{6}=23$, $\mu_{1}=1$, $\mu_{2}=7$,
$\mu_{3}=12$, $\mu_{4}=18$, $\mu_{5}=20$, $\mu_{6}=9598038t+53979$,
$a_{1}=418600t+4735,$ $a_{2}=260015t+2941$, $a_{3}=45448t+514$,
$a_{4}=52440t+593$, $a_{5}=18837t+213$, $a_{6}=34580t+391$,
$n=\sum_{i=1}^{6}a_{i}p_{i}=829920t+9387$, where $t$ is a
nonnegative integer.
\end{theorem}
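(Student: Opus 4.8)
The plan is to derive Theorem~\ref{th:s6xinlei} as an immediate specialization of the general extension result in Theorem~\ref{th:kuozhan}, using the concrete $s=6$ data furnished by Theorem~\ref{th:s=6} as the ``seed'' distance integral graph. First I would record that, by Theorem~\ref{th:s=6}, the complete $r$-partite graph with parameters $p_{1}=2$, $p_{2}=5$, $p_{3}=11$, $p_{4}=18$, $p_{5}=21$, $p_{6}=23$ and eigenvalues $\mu_{1}=1$, $\mu_{2}=7$, $\mu_{3}=12$, $\mu_{4}=18$, $\mu_{5}=20$, $\mu_{6}=53979$, together with the multiplicities $a_{1}=4735$, $a_{2}=2941$, $a_{3}=514$, $a_{4}=593$, $a_{5}=213$, $a_{6}=391$, is distance integral; note that the interlacing inequalities $-2<p_{1}-2<\mu_{1}<p_{2}-2<\mu_{2}<\cdots<p_{6}-2<\mu_{6}$ hold and the $a_{k}$ are positive integers given by formula \eqref{eq:ad-1}, so the hypotheses of Theorem~\ref{th:kuozhan} are satisfied with $s=6$.

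Next I would compute, for $k=1,2,\ldots,6$, the quantities $d_{k}=GCD\bigl(\prod_{i=1}^{5}(\mu_{i}-p_{k}+2),\ p_{k}\prod_{i\neq k}(p_{i}-p_{k})\bigr)$ from \eqref{eq:ad-4}, then $r_{k}=p_{k}\prod_{i\neq k}(p_{i}-p_{k})/d_{k}$ from \eqref{eq:ad-3}, and finally $r=LCM(r_{1},\ldots,r_{6})$. Substituting into \eqref{eq:ad-2} gives the new multiplicities $b_{k}=b_{k}(t)$, which are linear in $t$; the claim is that this computation yields exactly $r=9598038$, so that $\mu_{6}'=\mu_{6}+rt=9598038t+53979$, together with $b_{1}=418600t+4735$, $b_{2}=260015t+2941$, $b_{3}=45448t+514$, $b_{4}=52440t+593$, $b_{5}=18837t+213$, $b_{6}=34580t+391$. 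Then Theorem~\ref{th:kuozhan} asserts directly that the complete multipartite graph $K_{b_{1}\cdot p_{1},\ldots,b_{6}\cdot p_{6}}$ is distance integral with eigenvalues $\mu_{1},\ldots,\mu_{5},\mu_{6}'$ for every nonnegative integer $t$. The formula $n=\sum_{i=1}^{6}b_{i}p_{i}=829920t+9387$ follows by a one-line linear arithmetic check: the constant term reproduces the seed order $\sum a_{i}p_{i}=9387$, and the coefficient of $t$ is $\sum (\text{coeff of }t\text{ in }b_{i})\cdot p_{i}=829920$.

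Finally, to pass from $(p_{1},\ldots,p_{6})$ to $(p_{1}q,\ldots,p_{6}q)$ for an arbitrary positive integer $q$, I would invoke Theorem~\ref{th:newclass}: distance integrality of $K_{b_{1}\cdot p_{1}q,\ldots,b_{6}\cdot p_{6}q}$ is equivalent to distance integrality of $K_{b_{1}\cdot p_{1},\ldots,b_{6}\cdot p_{6}}$, which has just been established. Since $GCD(p_{1},\ldots,p_{6})=GCD(2,5,11,18,21,23)=1$, the vector is primitive, so no divisibility obstruction arises and the statement is exactly what Theorem~\ref{th:newclass} delivers.

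The only genuine obstacle is the bookkeeping in the middle step: one must verify that the gcd/lcm computation actually produces the stated value $r=9598038$ and the stated linear coefficients of the $b_{k}(t)$, rather than a proper divisor or multiple. I expect this to be a purely mechanical (if tedious) integer computation with the explicit numbers $p_{i}$, $\mu_{i}$ above; the conceptual content is entirely contained in Theorems~\ref{th:kuozhan} and \ref{th:newclass}, so once the arithmetic is confirmed the proof is a two-sentence citation, exactly as the authors indicate with ``The proof directly follows from Theorem~\ref{th:newclass} and Theorem~\ref{th:kuozhan}.''
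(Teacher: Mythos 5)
Your proposal is correct and follows exactly the route the paper takes: the paper's entire proof is the citation ``The proof directly follows from Theorem~\ref{th:newclass} and Theorem~\ref{th:kuozhan},'' with Theorem~\ref{th:s=6} supplying the seed graph, and you have simply made explicit the $d_k$, $r_k$, $r=LCM(r_1,\ldots,r_6)$ computation and the final $q$-scaling step that the authors leave implicit. No gaps; your version is just a more detailed account of the same argument.
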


\begin{proof}
The proof directly follows from Theorem \ref{th:newclass} and Theorem \ref{th:kuozhan}.
\end{proof}


\end{CJK*}
\end{document}